\font\smallit=cmti10
\font\smalltt=cmtt10
\renewcommand\section{\@startsection {section}{1}{\z@}
{-30pt \@plus -1ex \@minus -.2ex}
{2.3ex \@plus.2ex}
{\normalfont\normalsize\bfseries}}
\renewcommand\subsection{\@startsection{subsection}{2}{\z@}
{-3.25ex\@plus -1ex \@minus -.2ex}
{1.5ex \@plus .2ex}
{\normalfont\normalsize\bfseries}}
\renewcommand{\@seccntformat}[1]{\csname the#1\endcsname. }
\newtheorem{theorem}{Theorem}[section]
\newtheorem{lemma}[theorem]{Lemma}
\theoremstyle{definition}
\newtheorem{definition}[theorem]{Definition}
\theoremstyle{remark}
\newtheorem{remark}[theorem]{Remark}
\newtheorem{qs}[theorem]{Question}
\numberwithin{equation}{section}
\newcommand{\ben}{{\mathbb N}}
\newcommand{\ber}{{\mathbb R}}
\newcommand{\pf}{\mathcal{P}_f}
\newcommand{\emp}{\emptyset}
\newcommand{\nhat}[1]{\{1,2,\discretionary{}{}{}\ldots,\discretionary{}{}{}#1\}}
\newcommand{\vvarphi}{\raise 2 pt \hbox{$\varphi$}}
\newcommand{\cl}{c\ell}
\begin{document}

\begin{center}
\uppercase{\bf ADDITIVE AND MULTIPLICATIVE STRUCTURES OF $C^{\star}$-SETS}
\vskip 20pt
{\bf Dibyendu De\footnote{The author acknowledges the support received from the DST-PURSE programme grant.}}\\
{\smallit Department of Mathematics, University of Kalyani, Kalyani-741235,
West Bengal, India}\\
{\tt dibyendude@klyuniv.ac.in}\\
\vskip 10pt
%{\bf Author Two\footnote{any footnote here}}\\
%{\smallit Department of Mathematics, University of Two, Anywhere AAAAA, Country}\\
%{\tt you@math.two.edu}\\ (optional)
\end{center}
\vskip 30pt
\centerline{\smallit Received: , Revised: , Accepted: , Published: } % We will fill in the dates
\vskip 30pt

\centerline{\bf Abstract}

\noindent
It is known that for an IP${^\star}$ set $A$ in $(\mathbb{N},+)$ and  a sequence $\langle x_{n}\rangle_{n=1}^{\infty}$ in $\mathbb{N}$, there exists a sum subsystem $\langle y_{n}\rangle_{n=1}^{\infty}$  of $\langle
x_{n}\rangle_{n=1}^{\infty}$ such that
 $FS(\langle
y_n\rangle_{n=1}^\infty)\cup FP(\langle
y_n\rangle_{n=1}^\infty)\subseteq A$. Similar types of result have also been
proved for central$^{\star}$ sets where the sequences have been considered from the class of minimal sequences. In this present work, we shall prove some analogous results for C$^{\star}$-sets for a more general class of sequences.

\pagestyle{myheadings}
\markright{\smalltt INTEGERS: 13 (2013)\hfill}
\thispagestyle{empty}
\baselineskip=12.875pt
\vskip 30pt

\section{Introduction}

A famous Ramsey theoretic result is Hindman's Theorem :

\begin{theorem}\label{hindman}
Given a finite coloring of $\mathbb{N}=\bigcup_{i=1}^r A_i$,  there exists a sequence $\langle x_{n}\rangle_{n=1}^{\infty}$ in $\mathbb{N}$ and $i\in\{1,2,\ldots ,r\}$ such that
$$FS(\langle x_{n}\rangle_{n=1}^{\infty})=\left\{\sum_{n\in
F}x_n:F\in\mathcal{P}_f(\mathbb{N})\right\}\subseteq A_i,$$
 where for any set $X$,
$\mathcal{P}_f(X)$ is the set of all finite nonempty subsets of $X$.
\end{theorem}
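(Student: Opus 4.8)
The plan is to prove Hindman's Theorem through the algebraic structure of the Stone-\v{C}ech compactification $\beta\mathbb{N}$, which is the natural framework for this paper. First I would recall that $(\beta\mathbb{N}, +)$ — where $+$ is the continuous extension of ordinary addition that makes the right translations $q \mapsto q + p$ continuous — is a compact right-topological semigroup, once we identify $\beta\mathbb{N}$ with the space of ultrafilters on $\mathbb{N}$. The decisive structural input is Ellis's theorem: every compact right-topological semigroup contains an idempotent. Hence there is a $p \in \beta\mathbb{N}$ with $p + p = p$.

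Next, I would fix such an idempotent $p$. Since $\mathbb{N} = \bigcup_{i=1}^r A_i$ is a finite partition and $p$ is an ultrafilter, exactly one cell lies in $p$; call it $A = A_i$. The task reduces to producing a sequence whose finite sums all fall in $A$. Writing $-n + B = \{m \in \mathbb{N} : n + m \in B\}$, the central lemma I would establish is the following: for $B \in p$ the set $B^{\star} = \{n \in B : -n + B \in p\}$ again belongs to $p$, and moreover $-n + B^{\star} \in p$ for every $n \in B^{\star}$. The first claim is immediate from $p + p = p$, which by the definition of $+$ on ultrafilters says precisely that $\{n : -n + B \in p\} \in p$, so $B^{\star} = B \cap \{n : -n + B \in p\} \in p$. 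The second is a short verification: for $n \in B^{\star}$ one computes $-n + B^{\star} = (-n + B) \cap \{m : -m + (-n + B) \in p\}$, and both factors lie in $p$ by idempotency.

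With the lemma in hand, I would build $\langle x_n\rangle_{n=1}^{\infty}$ by induction, maintaining the invariant that every finite sum $\sum_{i \in F} x_i$ with $\emptyset \neq F \subseteq \{1,\ldots,n\}$ lies in $A^{\star}$. Choose $x_1 \in A^{\star}$. Given $x_1,\ldots,x_n$ satisfying the invariant, the set
\[
B_n \;=\; A^{\star} \cap \bigcap_{\emptyset \neq F \subseteq \{1,\ldots,n\}} \bigl(-\textstyle\sum_{i \in F} x_i + A^{\star}\bigr)
\]
is a finite intersection of members of $p$, hence $B_n \in p$ and in particular $B_n \neq \emptyset$; pick $x_{n+1} \in B_n$. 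A direct check using the second half of the lemma shows the invariant persists for $\{1,\ldots,n+1\}$, so $FS(\langle x_n\rangle_{n=1}^{\infty}) \subseteq A^{\star} \subseteq A = A_i$, as required.

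The main obstacle is the existence of the idempotent ultrafilter itself: Ellis's theorem rests on Zorn's lemma to extract a minimal closed subsemigroup and then exploits right-continuity, and one must first verify that addition extends to a genuinely right-topological (not jointly continuous) operation on $\beta\mathbb{N}$. Once idempotency is secured, the passage to $B^{\star}$ and the finite-intersection induction are entirely routine.
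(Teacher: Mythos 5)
Your proof is correct and is exactly the Galvin--Glazer idempotent-ultrafilter argument that the paper itself points to (it cites \cite[Theorem 10.3]{refCW} and \cite[Corollary 5.10]{refHS} rather than writing the argument out); the key lemma on $B^{\star}$ is \cite[Lemma 4.14]{refHS}, which the paper also uses later in the proof of Theorem \ref{C*comb}. Both halves of your lemma and the finite-intersection induction check out, so there is nothing to add.
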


A strongly negative answer to a combined additive and multiplicative version of Hindman's Theorem was presented in ~\cite[Theorem 2.11]{refHpp}. Given a sequence  $\langle
x_{n}\rangle_{n=1}^{\infty}$ in $\mathbb{N}$, let us denote $PS(\langle
x_{n}\rangle_{n=1}^{\infty})$ = $\{x_m+x_n:m,n\in\mathbb{N}$ and
$m\neq n\}$ and $PP(\langle
x_{n}\rangle_{n=1}^{\infty})=\{x_m\cdot x_n:m,n\in\mathbb{N}$
and $m\neq n\}$.

\begin{theorem}\label{notramseyn}

There exists a finite partition
$\mathcal{R}$ of $\mathbb{N}$ with no one-to-one sequence $\langle x_{n}\rangle_{n=1}^{\infty}$ in $\mathbb{N}$
such that $PS(\langle x_{n}\rangle_{n=1}^{\infty})\cup PP(\langle
x_{n}\rangle_{n=1}^{\infty})$ is contained in one cell of the
partition $\mathcal{R}$.
\end{theorem}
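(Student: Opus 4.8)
The plan is to exhibit an explicit finite partition $\mathcal{R}$ and show no one-to-one sequence can have $PS \cup PP$ inside a single cell, following the spirit of~\cite{refHpp}. The guiding principle is that addition and multiplication operate on incompatible scales: writing $|x| = \lfloor \log_2 x\rfloor$ for the binary length, one has $|x+y| = \max(|x|,|y|) + O(1)$ while $|xy| = |x| + |y| + O(1)$. So the first attempt is the coarse invariant $\gamma(n)$ that records, modulo $2$, which doubly-exponential block $[2^{2^k}, 2^{2^{k+1}})$ contains $n$; equivalently, the parity of the block of the exponent line in which $|n|$ sits. If $x,y$ lie in a common block, so that $|x|,|y| \in [2^k, 2^{k+1})$, then $|x+y|$ stays (up to an edge effect) in the same block $k$, whereas $|xy| = |x|+|y| \in [2^{k+1}, 2^{k+2})$ lands in block $k+1$. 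Thus a same-block sum and product receive opposite colors, which is exactly the separation one wants.

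The difficulty, which I would confront head-on, is that this scale-only invariant is \emph{provably insufficient}: an adversary may simply choose every $x_n$ in a block of even parity, making all pairwise sums and all pairwise products land in even blocks. More is true. Any coloring that depends only on the scale $|n|$ cannot work, because it amounts to a finite coloring $\gamma$ of the exponent set, and by Hindman's Theorem (Theorem~\ref{hindman}) applied to the set $L = \{|x_n|\}$ of lengths there is always an infinite $L'$ with $FS(L')$ monochromatic, hence with $L' \cup (L'+L')$ monochromatic. But ``$\gamma$ constant on $L' \cup (L'+L')$'' is precisely the configuration a scale-only invariant must forbid. Therefore the partition must read finer arithmetic data than the scale alone.

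To repair this, I would augment $\gamma$ with a component that detects the carry/convolution behaviour peculiar to multiplication but absent from addition — for instance a residue modulo a fixed modulus, or a mid-order digit pattern — and combine it across a \emph{bounded} number of iterated-logarithm levels so that the palette stays finite. The argument then proceeds by contradiction: assume a one-to-one sequence with $PS \cup PP$ in one cell; pass to a rapidly increasing subsequence to suppress the $O(1)$ edge and carry ambiguities except where they are wanted; and push the two simultaneous constraints (all pairwise sums one colour, all pairwise products the \emph{same} colour) through the sum-versus-product length dichotomy. The aim is to force the lengths and the auxiliary residues of the $x_n$ into a rigid arithmetic pattern that the added component renders impossible, closing off the ``escape to even levels'' that defeats the scale-only colouring.

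The step I expect to be the main obstacle is exactly this reconciliation of two regimes with a single finite colouring. When the sequence elements are of comparable scale, the block-jump of the product does all the work; but when they are wildly separated — so that no two share a block and $|x_m x_n| \approx |x_n|$ agrees with $|x_m + x_n|$ — the separation of sums from products must be produced entirely by the finer residue/digit invariant, and the interaction of that invariant with multiplication is where the genuine combinatorics lies. Verifying the resulting case analysis while controlling the unavoidable off-by-one carries, and confirming that the iterated construction terminates with finitely many colours, is the crux of the proof.
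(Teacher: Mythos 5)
The paper itself gives no proof of this statement; it simply cites \cite[Theorem 2.11]{refHpp}, so the comparison must be against Hindman's original argument. Your proposal correctly isolates the central tension --- pairwise sums roughly preserve binary length while pairwise products add lengths --- and your observation that a scale-only colouring must fail (via Hindman's Theorem applied to the set of lengths) is a genuine and correct insight that also appears, in essence, in the actual proof. However, what you have written is a research plan, not a proof, and there is a concrete gap at exactly the point you flag yourself: the partition $\mathcal{R}$ is never defined. Saying that one should augment the block-parity invariant with ``a residue modulo a fixed modulus, or a mid-order digit pattern'' is not a construction; different choices behave very differently, and a residue modulo a fixed modulus in particular cannot work on its own for the widely-separated regime, since for any fixed $m$ one can choose the $x_n$ all congruent to $1$ modulo $m$, making all pairwise products again congruent to $1$ while sums are congruent to $2$ --- which shows such a component separates sums from products but then must be reconciled with the comparable-scale regime where the block-jump is doing the work, and you give no mechanism for combining the two regimes into one finite colouring that defeats every one-to-one sequence simultaneously.

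The second unresolved issue is the case analysis you defer. In Hindman's actual argument the colouring is built from the base-$10$ (or binary) expansion --- recording data such as the parity of the number of digits together with the positions and values of the leading and trailing blocks of the expansion --- and the verification that no one-to-one sequence can have $PS\cup PP$ monochromatic is a lengthy, delicate case analysis on how the expansions of $x_m+x_n$ and $x_m\cdot x_n$ interact under carries. Your proposal acknowledges this analysis as ``the crux'' but does not carry out any of it, nor does it establish that the iterated construction you gesture at terminates with finitely many colours. As it stands the proposal identifies the right obstacles but proves nothing; to be a proof it would need an explicit $\mathcal{R}$ and a complete argument covering both the comparable-scale and separated-scale cases.
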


The original proof of Theorem \ref{hindman} was combinatorial in nature. But later, using the algebraic structure of $\beta\mathbb{N}$, a very elegant proof of  Hindman's Theorem was established by Galvin and Glazer, which they never published. A proof of the Theorem \ref{hindman}, that uses the algebraic structure of $\beta\mathbb{N}$ was first presented in \cite[Theorem 10.3]{refCW}. One can also see the proof in \cite[Corollary 5.10]{refHS}. 

Let us first give a brief description of the algebraic structure of $\beta S_d$ for a discrete semigroup $(S, \cdot)$. We take the points of $\beta S_d$ to be the ultrafilters on
$S$, identifying the principal ultrafilters with the points of $S$ and thus pretending that $S\subseteq\beta S_d$. Given a set $A\subseteq S$, let us define the subsets of $\beta S_d$ by the following formula:

$$\cl A = \overline{A}= \{p\in\beta S_d : A\in p\}.$$

\noindent Then the set $\{\cl A\subseteq\beta S_d:A\subseteq\ben\}$ forms a basis for the closed sets of $\beta S_d$ as well as for the open sets. The operation $``\cdot"$ on $S$ can be extended to the Stone-\v{C}ech
compactification $\beta S_d$ of $S$, so that $(\beta S_d,\cdot)$ becomes a compact right topological semigroup (meaning that
for any $p \in \beta S_d$, the function $\rho_p : \beta S_d \rightarrow \beta S_d$, defined by $\rho_p(q) = q \cdot p$, is continuous)
with $S$ contained in its topological center (meaning that for any $x \in S$, the function
$\lambda_x : \beta S_d \rightarrow \beta S_d$, defined by $\lambda_x(q) = x \cdot q$, is continuous). A nonempty subset $I$ of a semigroup $T$ is called a \emph{left ideal of $S$} if $TI\subset I$,  a \emph{right ideal} if $IT\subset I$,
and a \emph{two-sided ideal} (or simply an \emph{ideal}\/) if it is both a left and a right ideal.
A \emph{minimal left ideal} is a left ideal that does not contain any proper left ideal.
Similarly, we can define a \emph{minimal right ideal} and the \emph{smallest ideal}.

Numakura proved in \cite{refN} (as remarked in \cite[Lemma 8.4]{refF}), that any compact Hausdorff right topological semigroup $T$ contains idempotents and therefore
has a smallest two-sided ideal

$$\begin{array}{ccc}
    K(T) & = & \bigcup\{L:L \text{ is a minimal left ideal of } T\} \\
         & = & \, \, \, \, \, \bigcup\{R:R \text{ is a minimal right ideal of } T\}.\\
  \end{array}$$

Given a minimal left ideal $L$ and a minimal right ideal
$R$, it easily follows that $L\cap R$ is a group and thus in particular contains
an idempotent.    If $p$ and $q$ are idempotents in $T$,
we write $p\leq q$ if and only if $p\cdot q=q\cdot p=p$. An idempotent
is minimal with respect to
this relation if and only if it is a member of the smallest ideal $K(T)$ of $T$.

Given $p,q\in\beta S$ and $A\subseteq S$, the set $A\in p\cdot q$ if and only if  $\{x\in S:x^{-1}A\in q\}\in p$, where $x^{-1}A=\{y\in S:x\cdot y\in A\}$. See \cite{refHS} for an elementary
introduction to the algebra of $\beta S$ and for any unfamiliar details.

A set $A\subseteq\mathbb{N}$ is called an IP$^{\star}$ set if it belongs to every
idempotent in $\beta\mathbb{N}$. Given a sequence $\langle
x_{n}\rangle_{n=1}^{\infty}$ in $\mathbb{N}$, we let $FP(\langle x_{n}\rangle_{n=1}^{\infty})$ be the product
analogue of finite sums. Given a sequence $\langle
x_{n}\rangle_{n=1}^{\infty}$ in $\mathbb{N}$, we say that $\langle
y_n\rangle_{n=1}^{\infty}$ is a {\it sum subsystem\/} of $\langle
x_{n}\rangle_{n=1}^{\infty}$, provided there is a sequence $\langle
H_{n}\rangle_{n=1}^{\infty}$ of nonempty finite subsets of
$\mathbb{N}$, such that $\max H_n<\min H_{n+1}$ and $y_n=\sum_{t\in
H_n}\,x_t$ for each $n\in\mathbb{N}$. The following Theorem  \cite[Theorem 2.6]{refBHic} shows that IP$^{\star}$ sets have substantially rich multiplicative structures.

\begin{theorem}\label{ip*}
Let $\langle x_{n}\rangle_{n=1}^{\infty}$ be a sequence in $\mathbb{N}$ and $A$ be an IP${^\star}$ set
in $(\mathbb{N},+)$. Then there exists a sum subsystem $\langle y_{n}\rangle_{n=1}^{\infty}$  of $\langle
x_{n}\rangle_{n=1}^{\infty}$ such that
 $$FS(\langle
y_n\rangle_{n=1}^\infty)\cup FP(\langle
y_n\rangle_{n=1}^\infty)\subseteq A.$$
\end{theorem}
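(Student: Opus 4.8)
The plan is to produce a single additive idempotent that simultaneously governs the sums and the products, and then to run the usual Galvin--Glazer selection. First I would pass to the tails: set $T=\bigcap_{m=1}^{\infty}\overline{FS(\langle x_n\rangle_{n=m}^{\infty})}$. This is a nonempty compact subsemigroup of $(\beta\mathbb{N},+)$ (the tail sum-sets are nested, so $T\neq\emptyset$ by compactness, and $T$ is closed under $+$ by the standard argument), so it contains an idempotent $p=p+p$. Since $A$ is IP$^{\star}$ and $p$ is an additive idempotent, $A\in p$; and because $p\in T$ we have $FS(\langle x_n\rangle_{n=m}^{\infty})\in p$ for every $m$, which is exactly what will let each chosen $y_{k+1}$ be a block sum of tail terms of $\langle x_n\rangle$.

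The crux is a lemma handling products: for every $c\in\mathbb{N}$ the multiplicative preimage $c^{-1}A=\{z\in\mathbb{N}:c\cdot z\in A\}$ is again an additive IP$^{\star}$ set. To see this, let $\mu_c:\mathbb{N}\to\mathbb{N}$ be multiplication by $c$; it is an injective homomorphism of $(\mathbb{N},+)$, so its continuous extension $\widetilde{\mu_c}:\beta\mathbb{N}\to\beta\mathbb{N}$ is a homomorphism of $(\beta\mathbb{N},+)$. If $e$ is any additive idempotent, then $\widetilde{\mu_c}(e)$ is an additive idempotent too, whence $A\in\widetilde{\mu_c}(e)$ because $A$ is IP$^{\star}$; but $A\in\widetilde{\mu_c}(e)$ is equivalent to $\mu_c^{-1}(A)=c^{-1}A\in e$. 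As $e$ was arbitrary, $c^{-1}A$ belongs to every additive idempotent, i.e.\ it is IP$^{\star}$. In particular $c^{-1}A\in p$ for every $c\in\mathbb{N}$.

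With these in hand I would carry out the inductive construction. Put $A^{\star}=\{x\in A:-x+A\in p\}$; the standard idempotent lemma gives $A^{\star}\in p$ and $-x+A^{\star}\in p$ for every $x\in A^{\star}$. I choose $y_1\in A^{\star}\cap FS(\langle x_n\rangle_{n=1}^{\infty})$ together with $H_1$ satisfying $y_1=\sum_{t\in H_1}x_t$. Inductively, having produced $y_1,\dots,y_k$ and $H_1,\dots,H_k$ with $FS(\langle y_n\rangle_{n=1}^{k})\subseteq A^{\star}$ and all products of $y_1,\dots,y_k$ in $A$, I form the finite intersection
$$E_k=A^{\star}\cap\bigcap_{s\in FS(\langle y_n\rangle_{n=1}^{k})}(-s+A^{\star})\cap\bigcap_{\pi\in FP(\langle y_n\rangle_{n=1}^{k})}\pi^{-1}A\cap FS(\langle x_n\rangle_{n=m_k}^{\infty}),$$
where $m_k>\max H_k$. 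Every set in this intersection lies in $p$: $A^{\star}$ and each $-s+A^{\star}$ by the idempotent lemma (using $s\in A^{\star}$), each $\pi^{-1}A$ by the product lemma, and the tail sum-set because $p\in T$. Hence $E_k\in p$, so $E_k\neq\emptyset$. Choosing $y_{k+1}\in E_k$ and writing $y_{k+1}=\sum_{t\in H_{k+1}}x_t$ with $\min H_{k+1}\geq m_k>\max H_k$ keeps $\langle y_n\rangle$ a sum subsystem, forces every new sum $s+y_{k+1}$ into $A^{\star}$ and every new product $\pi\cdot y_{k+1}$ into $A$, thereby maintaining the invariants. Running the induction yields $FS(\langle y_n\rangle_{n=1}^{\infty})\subseteq A^{\star}\subseteq A$ and $FP(\langle y_n\rangle_{n=1}^{\infty})\subseteq A$.

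The main obstacle is the product lemma. Sums are controlled for free by any additive idempotent containing $A$, but the products of the $y_n$ are not themselves block sums of the $x_n$, so there is no a priori reason a purely additive ultrafilter should see them inside $A$. The homomorphism-pushforward argument is precisely what converts the additive IP$^{\star}$ hypothesis into the memberships $\pi^{-1}A\in p$, and verifying that the finitely many sum- and product-constraints can all be met by a single element of $p$ at each stage is the only point demanding care.
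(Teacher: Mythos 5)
Your proof is correct and is essentially the standard argument: the paper itself only cites \cite[Theorem 2.6]{refBHic} (or \cite[Corollary 16.21]{refHS}) here, but the cited proof is exactly your route --- an idempotent in $\bigcap_m \cl FS(\langle x_n\rangle_{n=m}^{\infty})$, the pushforward lemma that $c^{-1}A$ is again IP$^{\star}$, and the Galvin--Glazer induction with $A^{\star}$. This is also the same template the paper runs in its own proof of Theorem \ref{C*comb}, with $J(\ben)$ and Lemma \ref{C*} replacing the IP$^{\star}$ ingredients.
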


Let us recall the definition of central set  \cite[Definition 4.42]{refHS}.

\begin{definition}
Let $S$ be a semigroup and $C\subseteq S$. Then $C$ is said to be {\it central} if
and only if there is some idempotent $p\in K(\beta S)$ such that $C\in p$.
\end{definition}

The algebraic structure of the smallest ideal of $\beta S$ plays a significant role in Ramsey
Theory. It is known that any central subset of $(\mathbb{N}, +)$ is guaranteed to have
substantial amount of additive combinatorial  structures. But Theorem 16.27 of \cite{refHS} shows that central sets in $(\mathbb{N},+)$
need not admit any multiplicative structure at all. On the other hand, in \cite[Theorem 2.4]{refBHic} we see that sets which belong to every minimal idempotent of $\beta\ben$, called central${^\star}$ sets, must have significant
multiplicative structures. In fact, central${^\star}$ sets in any semigroup $(S,\cdot)$ are defined to be those
sets which meet every central set.

\begin{theorem}\label{mcentral}
If $A$ is a central${^\star}$ set in $(\mathbb{N}, +)$, then it is also central in $(\mathbb{N},\cdot)$.
\end{theorem}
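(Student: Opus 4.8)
The plan is to produce a single idempotent that is minimal for the multiplicative structure and that still contains $A$. Since $A$ is central$^{\star}$ in $(\mathbb{N},+)$, it lies in every additive minimal idempotent, so writing $E_{+}=E(K(\beta\mathbb{N},+))$ for the set of minimal idempotents of $(\beta\mathbb{N},+)$, the characterization of central$^{\star}$ sets gives $E_{+}\subseteq\overline{A}$; as $\overline{A}$ is closed, $\overline{E_{+}}\subseteq\overline{A}$. Hence it suffices to exhibit a \emph{multiplicative} minimal idempotent $q$ lying in $\overline{E_{+}}$: such a $q$ is an idempotent of $K(\beta\mathbb{N},\cdot)$ with $A\in q$, which is exactly what it means for $A$ to be central in $(\mathbb{N},\cdot)$.

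The key step is to see how the positive integers act on $E_{+}$ by multiplication. For fixed $n$, the left translation $\lambda_{n}$ is continuous (since $\mathbb{N}$ is in the topological center) and agrees on the dense set $\mathbb{N}$ with the map $x\mapsto nx$, which is an additive homomorphism; so $\lambda_{n}$ is the continuous additive endomorphism of $(\beta\mathbb{N},+)$ extending $x\mapsto nx$, and it is a topological isomorphism of $(\beta\mathbb{N},+)$ onto the closed subsemigroup $(\overline{n\mathbb{N}},+)$. Because $n\mathbb{N}$ is syndetic it meets $K(\beta\mathbb{N},+)$, so by the standard fact that $K(S)=S\cap K(\beta\mathbb{N},+)$ for a closed subsemigroup $S$ meeting the smallest ideal, we obtain $K(\overline{n\mathbb{N}},+)=\overline{n\mathbb{N}}\cap K(\beta\mathbb{N},+)$. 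Transporting minimal idempotents across the isomorphism $\lambda_{n}$ then yields $\lambda_{n}(E_{+})\subseteq E_{+}$; that is, $n\cdot p$ is again a minimal additive idempotent for every $p\in E_{+}$.

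With this established I would fix one additive minimal idempotent $p$. Since $\mathbb{N}\cdot p\subseteq E_{+}$ and $\rho_{p}$ is continuous with $\mathbb{N}$ dense in $\beta\mathbb{N}$, the set $L=\beta\mathbb{N}\cdot p$ satisfies $L\subseteq\overline{E_{+}}\subseteq\overline{A}$. Moreover $L$ is a closed left ideal of the compact right topological semigroup $(\beta\mathbb{N},\cdot)$: it is closed as a continuous image of a compact set, and it is a left ideal by associativity of $\cdot$. Therefore $L$ contains a minimal left ideal of $(\beta\mathbb{N},\cdot)$, which in turn contains an idempotent $q$, necessarily lying in $K(\beta\mathbb{N},\cdot)$. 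Finally $q\in L\subseteq\overline{A}$ forces $A\in q$, so $A$ is central in $(\mathbb{N},\cdot)$.

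The main obstacle is the second paragraph: showing that multiplication by a positive integer carries additive minimal idempotents to additive minimal idempotents. Everything downstream is routine semigroup theory, but this is the fact that genuinely links the additive and multiplicative smallest ideals, and it rests on identifying $\lambda_{n}$ with an additive isomorphism onto $\overline{n\mathbb{N}}$ together with the compatibility of smallest ideals with closed subsemigroups that meet them.
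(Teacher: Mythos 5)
Your proof is correct. The paper gives no argument of its own here (it simply cites Bergelson--Hindman, Theorem 2.4), and your route --- showing $n\cdot p$ is again a minimal additive idempotent by viewing $\lambda_n$ as an additive isomorphism onto $\overline{n\mathbb{N}}$ and using $K(\overline{n\mathbb{N}},+)=\overline{n\mathbb{N}}\cap K(\beta\mathbb{N},+)$, then extracting a multiplicative minimal idempotent from the closed left ideal $\beta\mathbb{N}\cdot p\subseteq \overline{E_+}\subseteq\overline{A}$ of $(\beta\mathbb{N},\cdot)$ --- is essentially the same argument as in that cited proof (see also Hindman--Strauss, Section 16.2).
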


In case of central${^\star}$ sets, a result similar  to \ref{ip*} was proved in ~\cite[Theorem 2.4]{refDe} for a restricted class of sequences called minimal sequences. Recall that a sequence $\langle x_{n}\rangle_{n=1}^{\infty}$ in $\mathbb{N}$ is said to be a minimal sequence if
$$\bigcap_{m=1}^{\infty}\cl{FS(\langle
x_n\rangle_{n=m}^\infty)}\cap K(\beta \mathbb{N})\neq\emptyset.$$

\begin{theorem}\label{central*}
Let $\langle y_{n}\rangle_{n=1}^{\infty}$ be a minimal
sequence and let $A$ be a central${^\star}$ set in $(\mathbb{N},+)$. Then
there exists a sum subsystem $\langle x_{n}\rangle_{n=1}^{\infty}$
of $\langle y_{n}\rangle_{n=1}^{\infty}$ such that \break
$$FS(\langle x_{n}\rangle_{n=1}^{\infty})\cup
FP(\langle x_{n}\rangle_{n=1}^{\infty})\subseteq A.$$
\end{theorem}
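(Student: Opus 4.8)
The plan is to isolate a single additive idempotent that simultaneously controls the sum-subsystem requirement, the additive finite sums, and---through an additive--multiplicative bridge---the finite products. Set $T=\bigcap_{m=1}^{\infty}\overline{FS(\langle y_{n}\rangle_{n=m}^{\infty})}$. This is a compact subsemigroup of $(\beta\mathbb{N},+)$, and since $\langle y_{n}\rangle_{n=1}^{\infty}$ is a minimal sequence we have $T\cap K(\beta\mathbb{N},+)\neq\emptyset$; hence $K(T)=T\cap K(\beta\mathbb{N},+)$ and $T$ contains an idempotent $e$ that is minimal in $(\beta\mathbb{N},+)$. Because $A$ is central${^\star}$, it belongs to every minimal idempotent of $(\beta\mathbb{N},+)$, so $A\in e$; and because $e\in T$ we have $FS(\langle y_{n}\rangle_{n=m}^{\infty})\in e$ for every $m$. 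Thus $e$ already encodes both the additive closure (via the usual star set $A^{\star}=\{x\in A:-x+A\in e\}$) and the freedom to keep choosing blocks arbitrarily far out in $\langle y_{n}\rangle_{n=1}^{\infty}$.

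The crux is to show that the \emph{multiplicative} conditions can also be phrased as membership in sets belonging to the \emph{same} idempotent $e$. Here I would prove the bridging lemma: if $A$ is central${^\star}$ in $(\mathbb{N},+)$ then $b^{-1}A$ is central${^\star}$ for every $b\in\mathbb{N}$. The mechanism is that left multiplication $\lambda_{b}\colon q\mapsto b\cdot q$ is a continuous additive homomorphism (using $b\cdot(p+q)=b\cdot p+b\cdot q$ for $b\in\mathbb{N}$), carrying $\beta\mathbb{N}$ isomorphically onto $\overline{b\mathbb{N}}$; since $b\mathbb{N}$ is syndetic, $\overline{b\mathbb{N}}$ meets $K(\beta\mathbb{N},+)$, so $\lambda_{b}$ sends minimal additive idempotents to minimal additive idempotents. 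Consequently, for any minimal idempotent $e$, the point $b\cdot e$ is again a minimal additive idempotent, whence $A\in b\cdot e$ and therefore $b^{-1}A\in e$. This lemma is the heart of the matter and is precisely the additive--multiplicative interaction underlying Theorem~\ref{mcentral}; it is the step I expect to require the most care.

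With these sets in hand the construction is a routine interleaved recursion. I would build finite blocks $H_{1}<H_{2}<\cdots$ and set $x_{n}=\sum_{t\in H_{n}}y_{t}$, maintaining the invariants $FS(\langle x_{i}\rangle_{i=1}^{n})\subseteq A^{\star}$ and $FP(\langle x_{i}\rangle_{i=1}^{n})\subseteq A$. Having chosen $x_{1},\dots,x_{n-1}$, the admissible choices for $x_{n}$ form the set
\[
FS(\langle y_{t}\rangle_{t=m}^{\infty})\cap\bigcap_{a\in FS(\langle x_{i}\rangle_{i<n})\cup\{0\}}(-a+A^{\star})\cap\bigcap_{b\in FP(\langle x_{i}\rangle_{i<n})\cup\{1\}}b^{-1}A,
\]
where $m>\max H_{n-1}$. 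Each factor lies in $e$: the first because $e\in T$, the second by the star property of $A^{\star}$, and the third by the bridging lemma; being a finite intersection it again lies in $e$ and is in particular nonempty. Choosing $x_{n}$ in it yields a block far enough out to keep $\langle x_{n}\rangle_{n=1}^{\infty}$ a sum subsystem, forces every new finite sum into $A^{\star}\subseteq A$, and forces every new finite product $b\cdot x_{n}$ into $A$. Since every element of $FP(\langle x_{n}\rangle_{n=1}^{\infty})$ is such a product, captured at the stage indexed by its largest factor, the invariants persist, and in the limit $FS(\langle x_{n}\rangle_{n=1}^{\infty})\cup FP(\langle x_{n}\rangle_{n=1}^{\infty})\subseteq A$, as required.
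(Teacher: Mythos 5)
Your proof is correct and is essentially the argument this result rests on: the paper itself only cites \cite[Theorem 2.4]{refDe} here, but your route --- a minimal idempotent $e$ in $T\cap K(\beta\mathbb{N},+)$ obtained from $K(T)=T\cap K(\beta\mathbb{N},+)$, the bridging lemma that $b^{-1}A$ is again central${}^{\star}$ via the continuous additive homomorphism $q\mapsto b\cdot q$ onto $\overline{b\mathbb{N}}$, and then the interleaved Galvin--Glazer recursion with $A^{\star}$ --- is exactly the template the author reuses for the $C^{\star}$ analogue in Theorem \ref{C*comb}, with Lemma \ref{C*} playing the role of your bridging lemma. I see no gaps.
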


A similar result was proved in a different setup in \cite{refDP}.

The original Central Sets Theorem was proved by Furstenberg in \cite[Theorem 8.1]{refF} (using a different
but equivalent definition of central sets). However, the most general version of Central Sets Theorem is in \cite{refDHS}. We state it here only for the case of a commutative semigroup.

\begin{theorem}
Let $(S,\cdot)$ be a commutative semigroup,
and let ${\cal T}={}^{\hbox {$\ben$}}\!S$ be the set of all
sequences in $S$. Let $C$ be a central subset of $S$. Then there
exist functions $\alpha:\pf({\cal T})\to S$ and
$H:\pf({\cal T})\to\pf(\ben)$ such that
\begin{enumerate}
\item if $F,G\in\pf({\cal T})$ and $F\subsetneq G$
then $\max H(F)<\min H(G)$, and
\item whenever $m\in\ben$, $G_1,G_2,\ldots,G_m\in
\pf({\cal T})$,
 $G_1\subsetneq G_2\subsetneq \ldots
\subsetneq G_m$, and for each $i\in\nhat{m}$,
$f_i\in G_i$, one
has
$$\prod_{i=1}^m \big(\alpha(G_i)\cdot\prod_{t\in H(G_i)}\,f_i(t)\big)\in C.$$
\end{enumerate}
\end{theorem}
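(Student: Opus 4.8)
The plan is to combine the algebraic description of central sets with the combinatorial notion of a \emph{J-set}, in the spirit of \cite{refDHS}. Since $C$ is central, I would first fix a minimal idempotent $p\in K(\beta S)$ with $C\in p$ and set $C^{\star}=\{x\in C: x^{-1}C\in p\}$, where $x^{-1}C=\{y\in S: x\cdot y\in C\}$. The standard idempotent lemma (see \cite[Lemma 4.14]{refHS}) supplies the two facts I will lean on: $C^{\star}\in p$, and for every $x\in C^{\star}$ one has $x^{-1}C^{\star}\in p$. Call $A\subseteq S$ a \emph{J-set} if for every $F\in\pf({\cal T})$ and every $k\in\ben$ there are $a\in S$ and $L\in\pf(\ben)$ with $\min L>k$ such that $a\cdot\prod_{t\in L}f(t)\in A$ for all $f\in F$; passing to the shifted sequences $\hat f(n)=f(k+n)$ shows that the constraint $\min L>k$ is harmless and may always be imposed.

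The heart of the matter is the structural fact that $J(S)=\{q\in\beta S: \text{every member of }q\text{ is a J-set}\}$ is a closed two-sided ideal of $\beta S$, so that $K(\beta S)\subseteq J(S)$. Granting this, the minimal idempotent $p$ lies in $J(S)$, whence \emph{every} member of $p$ is a J-set. This is the only point at which minimality of $p$ enters, and it is exactly what one cannot expect from an arbitrary idempotent.

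With this in hand I would define $\alpha(F)$ and $H(F)$ by induction on $|F|$, maintaining condition (1) together with the strengthened form of (2) in which $C$ is replaced by $C^{\star}$ and the chain is required to terminate at $F$. For the inductive step at $F$, let $E$ be the finite set of all prefix products $\prod_{i=1}^{m-1}\big(\alpha(G_i)\cdot\prod_{t\in H(G_i)}f_i(t)\big)$ coming from chains $G_1\subsetneq\cdots\subsetneq G_{m-1}\subsetneq F$ with $f_i\in G_i$; by the induction hypothesis each $w\in E$ lies in $C^{\star}$, so $w^{-1}C^{\star}\in p$. Put $B=C^{\star}\cap\bigcap_{w\in E}w^{-1}C^{\star}$, a finite intersection of members of $p$, hence $B\in p$ and therefore a J-set. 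Applying the J-set property to $F$ with threshold $k=\max\{\max H(G):\emptyset\neq G\subsetneq F\}$ produces $a\in S$ and $L\in\pf(\ben)$, $\min L>k$, with $a\cdot\prod_{t\in L}f(t)\in B$ for all $f\in F$; I then set $\alpha(F)=a$ and $H(F)=L$. Condition (1) is immediate from $\min L>k$, and checking the cases $m=1$ and $m\ge 2$ (using $B\subseteq C^{\star}$ and $B\subseteq w^{-1}C^{\star}$ respectively) verifies the strengthened (2); since $C^{\star}\subseteq C$, the theorem follows.

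The main obstacle is the suppressed structural fact $K(\beta S)\subseteq J(S)$, equivalently that every central set is a J-set. Establishing it requires showing that the J-sets form a partition-regular family and that $J(S)$ absorbs the semigroup operation on both sides; the partition-regularity step is the genuinely hard combinatorial core, proved by a Hales--Jewett-type argument on the product $S^{|F|}$ rather than by any attempt to force the products $\prod_{t\in L}f(t)$ into members of $p$ directly. Indeed such products of an arbitrary sequence need bear no relation to $p$ whatsoever, which is precisely why the detour through J-sets is unavoidable.
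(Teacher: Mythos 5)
The paper gives no proof of this statement --- it is quoted verbatim from \cite{refDHS} --- so the comparison is with the argument in that reference, and your proposal reconstructs it faithfully: fix a minimal idempotent $p$ with $C\in p$, use $\cl K(\beta S)\subseteq J(S)$ (the closing theorem of the paper's introduction, \cite[Theorem 3.5]{refDHS}) to conclude that every member of $p$ is a $J$-set, and run the induction on $|F|$ with $C^{\star}$ and $B=C^{\star}\cap\bigcap_{w\in E}w^{-1}C^{\star}$, exactly as is done there. Your inductive step and the reduction of condition (1) to the shifted form of the $J$-set property are both correct; the only caveat is that the one fact you black-box (that piecewise syndetic sets are $J$-sets, which is what actually yields $K(\beta S)\subseteq J(S)$) carries essentially all of the difficulty, and in the commutative case \cite{refDHS} establishes it by a direct argument --- neither partition regularity of $J$-sets nor a Hales--Jewett-type theorem is needed for that particular implication.
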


Recently, a lot of attention has been paid to those sets which
satisfy the conclusion of the latest Central Sets Theorem.

\begin{definition}
Let $(S,\cdot)$ be a commutative semigroup,
and let ${\cal T}={}^{\hbox {$\ben$}}\!S$ be the set of all
sequences in $S$. A subset $C$ of $S$ is said to be a C-set if  there
exist functions $\alpha:\pf({\cal T})\to S$ and
$H:\pf({\cal T})\to\pf(\ben)$ such that
\begin{enumerate}
\item if $F,G\in\pf({\cal T})$ and $F\subsetneq G$,
then $\max H(F)<\min H(G)$ and
\item whenever $m\in\ben$, $G_1,G_2,\ldots,G_m\in
\pf({\cal T})$,
 $G_1\subsetneq G_2\subsetneq \ldots
\subsetneq G_m$, and for each $i\in\nhat{m}$,
$f_i\in G_i$, one
has
$$\prod_{i=1}^m \big(\alpha(G_i)\cdot\prod_{t\in H(G_i)}\,f_i(t)\big)\in C.$$
\end{enumerate}
\end{definition}

We now present some notations from \cite{refDHS}.

\begin{definition}
Let $(S,\cdot)$ be a commutative semigroup,
and let ${\cal T}={}^{\hbox {$\ben$}}\!S$ be the set of all
sequences in $S$.
\begin{enumerate}
\item A subset $A$ of $S$ is said to be a $J$-set if for every  $F\in\pf({\cal T})$
 there exist $a\in S$ and $H\in\pf(\ben)$ such that for all $f\in F$,\\
  $a\cdot\prod_{t\in H}f(t)\in A$.

 \item $J(S)$ $=$ $\{p\in\beta S$ $:$ $(\forall A\in p)(A$ is a $J$-set$)\}$.

 \end{enumerate}

\end{definition}
\begin{theorem}\label{J}
Let $(S, \cdot)$ be a discrete commutative semigroup and $A$ be a subset of $S$. Then  $A$
is a $J$-set if and only if $J(S) \cap\cl A\neq\emp$.
\end{theorem}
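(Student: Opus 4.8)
The plan is to prove the two implications separately: the reverse one is immediate from the definitions, and the forward one rests on a compactness argument whose only substantial ingredient is the partition regularity of the family of $J$-sets.

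First I would dispose of the easy direction. Suppose $J(S) \cap \cl A \neq \emp$ and fix $p$ in this intersection. Since $p \in \cl A$ we have $A \in p$, and since $p \in J(S)$ every member of $p$ is a $J$-set; in particular $A$ is a $J$-set. For the forward direction, I first record that the family $\mathcal{J}$ of $J$-sets is \emph{upward closed}: if $A$ is a $J$-set and $A \subseteq A'$, then for each $F \in \pf(\mathcal{T})$ the same $a \in S$ and $H \in \pf(\ben)$ that place $a \cdot \prod_{t \in H} f(t)$ in $A$ place it in $A'$. Next I rewrite $J(S)$ as an intersection of basic closed sets: for an ultrafilter $p$ we have $p \in J(S)$ if and only if no non-$J$-set lies in $p$, i.e. if and only if $S \setminus B \in p$ for every $B$ that fails to be a $J$-set. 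Hence
$$J(S) = \bigcap \{\cl(S \setminus B) : B \subseteq S \text{ is not a } J\text{-set}\},$$
so $J(S)$ is closed, and $J(S) \cap \cl A$ is an intersection of closed subsets of the compact space $\cl A$.

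By the finite intersection property it therefore suffices to show that for any finitely many non-$J$-sets $B_1, \dots, B_n$ the set $\cl A \cap \bigcap_{i=1}^{n} \cl(S \setminus B_i) = \cl\big(A \setminus (B_1 \cup \cdots \cup B_n)\big)$ is nonempty, which (since $\cl X = \emp$ exactly when $X = \emp$) amounts to saying that $A$ is not covered by finitely many non-$J$-sets.

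The key step, and the main obstacle, is the partition regularity of $\mathcal{J}$: if a $J$-set is partitioned into finitely many pieces, one piece is again a $J$-set. Granting this, the argument closes quickly: if $A \subseteq B_1 \cup \cdots \cup B_n$ with each $B_i$ not a $J$-set, then $A = \bigcup_{i=1}^n (A \cap B_i)$ with each $A \cap B_i$ a non-$J$-set (a subset of a non-$J$-set is non-$J$ by the upward closure just noted), contradicting partition regularity; so the finite intersection property holds and $J(S) \cap \cl A \neq \emp$. Establishing partition regularity itself is the genuinely hard part; it is the combinatorial heart of the theory of $C$-sets and is proved in \cite{refDHS} by a Ramsey-type/pigeonhole argument that, starting from witnessing families showing each piece fails to be a $J$-set, assembles a single family $F \in \pf(\mathcal{T})$ for $A$ and forces a monochromatic configuration by taking a product over sufficiently many coordinates. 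I would either invoke \cite{refDHS} directly for this fact or reproduce that combinatorial argument; everything else above is soft topological bookkeeping. (It is worth noting that the theorem and partition regularity are in fact essentially equivalent: from the theorem, a partition $A = A_1 \cup A_2$ of a $J$-set yields $p \in J(S)$ with $A \in p$, whence some $A_i \in p$ is a $J$-set.)
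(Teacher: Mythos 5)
Your proof is correct and follows essentially the same route as the paper: the paper simply cites the partition regularity of the family of $J$-sets together with \cite[Theorem 3.11]{refHS}, and that cited theorem is precisely the compactness/finite-intersection-property argument you write out in full. The only genuine content in either version is the partition regularity of $J$-sets, which you correctly isolate and attribute to \cite{refDHS}.
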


\begin{proof}
Since the collection of $J$-sets forms a partition regular family, the theorem follows from \cite[Therem 3.11]{refHS}.
\end{proof}
The following is a consequence of \cite[Theorem 3.8]{refDHS}. The easy proof for the commutative case can be found in \cite[Theorem 2.5]{refHS09}.

\begin{theorem}
Let $(S,+)$ be a commutative semigroup
and let ${\cal T}={}^{\hbox {$\ben$}}\!S$ be the set of all
sequences in $S$, and let $A\subseteq S$. Then $A$ is a $C$-set if and only if there is an idempotent $p\in \cl{A}\cap J(S)$.
\end{theorem}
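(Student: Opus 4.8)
The plan is to prove the two implications separately. For the direction that assumes an idempotent, I would run a recursion in the style of the Central Sets Theorem to manufacture the witnessing functions; for the converse I would build a closed subsemigroup of $\beta S$ sitting inside $\cl A$ and extract the idempotent by a compactness argument. Throughout I use additive notation, so that for $p,q\in\beta S$ and $R\subseteq S$ we have $R\in p+q$ iff $\{x:-x+R\in q\}\in p$.

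For the sufficiency direction, suppose an idempotent $p\in\cl A\cap J(S)$ is given. I would construct $\alpha:\pf({\cal T})\to S$ and $H:\pf({\cal T})\to\pf(\ben)$ by recursion on $|F|$. Write $A^{\star}=\{x\in A:-x+A\in p\}$; since $p$ is idempotent, $A^{\star}\in p$ and $-x+A^{\star}\in p$ for every $x\in A^{\star}$. The invariant I would maintain is that every partial sum $\sum_{i=1}^{m}\bigl(\alpha(G_i)+\sum_{t\in H(G_i)}f_i(t)\bigr)$ arising from an admissible chain $G_1\subsetneq\cdots\subsetneq G_m$ lies in $A^{\star}$. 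When processing a set $F$, there are only finitely many prefix sums $w$ coming from chains $G_1\subsetneq\cdots\subsetneq G_{m-1}\subsetneq F$ (as $F$ is finite and each $f_i$ ranges over a finite set), and by the invariant each such $w\in A^{\star}$, so $-w+A^{\star}\in p$. Hence $B:=A^{\star}\cap\bigcap_{w}(-w+A^{\star})\in p$, and because $p\in J(S)$ the set $B$ is a $J$-set. I then apply the $J$-set property to $F$, but first shift indices: replacing each $f\in F$ by $t\mapsto f(t+n)$ and translating the resulting $H$ back by $n$ produces a witness with $\min H$ as large as desired. Choosing $n=\max\{\max H(G):G\subsetneq F\}$ secures condition (1), and one obtains $\alpha(F)=a$ and $H(F)=H$ with $a+\sum_{t\in H}f(t)\in B$ for every $f\in F$. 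Every newly created sum then lands in $A^{\star}\subseteq A$, so condition (2) holds and the invariant is preserved.

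For the necessity direction, assume $A$ is a $C$-set with witnesses $\alpha,H$. For each $F\in\pf({\cal T})$ I would set
\[
R_F=\Bigl\{\textstyle\sum_{i=1}^{m}\bigl(\alpha(G_i)+\sum_{t\in H(G_i)}f_i(t)\bigr): m\in\ben,\ F\subseteq G_1\subsetneq\cdots\subsetneq G_m,\ f_i\in G_i\Bigr\}.
\]
By condition (2) we have $R_F\subseteq A$, and the family is downward directed since $R_{F\cup F'}\subseteq R_F\cap R_{F'}$, so $\Gamma:=\bigcap_F\cl{R_F}$ is a nonempty closed subset of $\cl A$. The key computation is that $\Gamma$ is a subsemigroup: given $p,q\in\Gamma$ and $F$, for $x\in R_F$ with top set $G_m$ one checks, by concatenating chains, that $R_{G_m\cup\{h\}}\subseteq -x+R_F$ for any sequence $h\notin G_m$ (here the proper inclusion $G_m\subsetneq G_m\cup\{h\}$ is exactly what makes the concatenated chain admissible); since $R_{G_m\cup\{h\}}\in q$ and $R_F\in p$, it follows that $R_F\in p+q$. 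Moreover each $R_F$ is a $J$-set: given $F_0\in\pf({\cal T})$, the single-term chain $G_1=F\cup F_0$ yields $\alpha(G_1)+\sum_{t\in H(G_1)}f(t)\in R_F$ for all $f\in F_0$. Hence by Theorem \ref{J}, $\cl{R_F}\cap J(S)\neq\emp$, and downward directedness gives these sets the finite intersection property, so $\Gamma\cap J(S)\neq\emp$. As $J(S)$ is a closed subsemigroup of $\beta S$ (from \cite{refDHS}), the intersection $\Gamma\cap J(S)$ is a compact right topological semigroup and therefore contains an idempotent $p$; this $p$ lies in $\cl A\cap J(S)$, completing the proof.

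The main obstacle is the bookkeeping in the sufficiency recursion: one must arrange that at every stage the finitely many partial sums extendable through the current $F$ already lie in $A^{\star}$, and that the index shift keeps the supports $H(\cdot)$ strictly increasing along chains so that condition (1) never fails retroactively. In the necessity direction the delicate point is the semigroup identity $R_F\in p+q$, which is precisely where the proper-inclusion design of $R_F$ (via the enlargement $G_m\cup\{h\}$) is essential; the remaining steps are routine compactness and the standard existence of idempotents in compact right topological semigroups.
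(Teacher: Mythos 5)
Your proof is correct. The paper itself gives no argument for this theorem---it is quoted as a consequence of \cite[Theorem 3.8]{refDHS}, with the commutative case attributed to \cite[Theorem 2.5]{refHS09}---and your two directions (the $A^{\star}$-recursion with the index shift $t\mapsto f(t+n)$ to secure $\max H(F)<\min H(G)$, and the closed subsemigroup $\Gamma=\bigcap_F\cl{R_F}$ meeting $J(S)$ via the $J$-set property of each $R_F$ and Ellis's theorem) are essentially the standard argument found in those references.
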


We conclude these introductory discussions with the following \cite[Theorem 3.5]{refDHS}.

\begin{theorem}
If $(S,\cdot)$ be a discrete commutative semigroup then $J(S)$ is a closed two-sided ideal of $\beta S$ and $\cl K(\beta S)\subset J(S)$.

\end{theorem}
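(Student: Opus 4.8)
The plan is to verify, in turn, that $J(S)$ is closed, that it is nonempty, and that it absorbs multiplication by arbitrary elements of $\beta S$ on each side; the containment $\cl K(\beta S)\subseteq J(S)$ will then come essentially for free. For closedness I would argue on the complement: if $p\notin J(S)$, then by definition some $A\in p$ fails to be a $J$-set, and the basic open neighbourhood $\cl A=\{q:A\in q\}$ of $p$ is then disjoint from $J(S)$, since every $q\in\cl A$ has the non-$J$-set $A$ among its members. Hence $\beta S\setminus J(S)$ is open and $J(S)$ is closed. For nonemptiness I would observe that $S$ itself is trivially a $J$-set, because any product $a\cdot\prod_{t\in H}f(t)$ already lies in $S$; Theorem~\ref{J} applied with $A=S$ then gives $J(S)=J(S)\cap\cl S\neq\emp$. (Alternatively, the Central Sets Theorem with $m=1$ shows every central set is a $J$-set, so every minimal idempotent lies in $J(S)$.)

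The core of the argument is showing $J(S)$ is a two-sided ideal, and here the left-ideal inclusion $\beta S\cdot J(S)\subseteq J(S)$ is the easy half. Given $p\in J(S)$, $q\in\beta S$, and $A\in q\cdot p$, the set $\{x:x^{-1}A\in p\}$ belongs to $q$ and is therefore nonempty; fixing one such $x$, the set $x^{-1}A$ is a $J$-set since $x^{-1}A\in p$. The mechanism is that left translation lifts the $J$-set property: for any $F\in\pf(\mathcal{T})$, choosing $a'\in S$ and $H\in\pf(\ben)$ witnessing that $x^{-1}A$ is a $J$-set and setting $a=x\cdot a'$ yields $a\cdot\prod_{t\in H}f(t)\in A$ for every $f\in F$. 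Thus $A$ is a $J$-set and $q\cdot p\in J(S)$; note that only associativity is needed here.

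The right-ideal inclusion $J(S)\cdot\beta S\subseteq J(S)$ is where I expect the real work, and it is the step that genuinely uses commutativity. Take $p\in J(S)$, $q\in\beta S$, and $A\in p\cdot q$, so that $C=\{x:x^{-1}A\in q\}\in p$ is a $J$-set. Fixing $F=\{f_1,\dots,f_n\}\in\pf(\mathcal{T})$ and applying the $J$-set property of $C$, I obtain $a\in S$ and $H\in\pf(\ben)$ with $c_i:=a\cdot\prod_{t\in H}f_i(t)\in C$ for each $i$. Each $c_i\in C$ gives $c_i^{-1}A\in q$, and since $q$ is an ultrafilter the finite intersection $\bigcap_{i=1}^{n}c_i^{-1}A$ again lies in $q$, hence is nonempty; choosing $y$ in it gives $c_i\cdot y\in A$ for all $i$. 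Commutativity is now essential: setting $b=y\cdot a$, one has $b\cdot\prod_{t\in H}f_i(t)=y\cdot c_i=c_i\cdot y\in A$ for every $i$, exhibiting $A$ as a $J$-set. The subtlety I want to flag is that a single $b$ and a single $H$ must serve all of $f_1,\dots,f_n$ at once; this is exactly what the common witness $H$ extracted from $C$, together with the single point $y$ of the intersection, supply, and it is the reason one cannot merely translate each $c_i$ separately.

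Finally, having established that $J(S)$ is a nonempty closed two-sided ideal of $\beta S$, the containment is immediate: $K(\beta S)$ is by definition the smallest two-sided ideal, so $K(\beta S)\subseteq J(S)$, and closedness of $J(S)$ upgrades this to $\cl K(\beta S)\subseteq J(S)$.
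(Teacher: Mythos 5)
Your proof is correct. Note that the paper itself gives no argument for this statement --- it is quoted verbatim from \cite[Theorem 3.5]{refDHS} --- so there is no in-paper proof to compare against; your write-up is essentially the standard argument from that reference. All the key points are in order: closedness via the complement, nonemptiness via the trivial $J$-set $S$ together with Theorem \ref{J} (or via minimal idempotents), the easy left-ideal half using only associativity, and the right-ideal half, where you correctly isolate the crux --- a single pair $(b,H)$ must work for all of $f_1,\ldots,f_n$ simultaneously, which is exactly what the common witness $H$ for the $J$-set $C$ and the single point $y\in\bigcap_{i=1}^n c_i^{-1}A$ provide, with commutativity used only to rewrite $y\cdot a\cdot\prod_{t\in H}f_i(t)$ as $c_i\cdot y$. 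The final containment $\cl K(\beta S)\subseteq J(S)$ then follows, as you say, from minimality of $K(\beta S)$ and closedness of $J(S)$.
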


\section{$C^{\star}$-set}

We have already discussed IP$^{\star}$ and central$^{\star}$ sets before; let us now introduce the notion of $C^{\star}$-set.

\begin{definition}
Let $(S,\cdot)$ be a discrete commutative semigroup. A set $A\subseteq S$ is said to be a C$^{\star}$-set if it is a member of all the idempotents of $J(S)$.
\end{definition}

It is clear from the definition of C$^{\star}$-set that
\begin{center}
 IP$^{\star}$-set $\Rightarrow$ C$^{\star}$-set $\Rightarrow$ central$^{\star}$-set.
\end{center}

\begin{remark}
It is shown in \cite{refH09} that there exists a set $A\subset\ben$ which is a $C$-set in $(\ben,\, +)$, but its upper Banach density (defined below) vanishes. Since $A$ is a $C$-set in $(\ben,\, +)$, there exists an idempotent $p\in J(\ben)$ such that $A\in p$. But, as the upper Banach density of $A$ is zero, it is not a central set in $(\ben,\, +)$,  so  it is not contained in any minimal idempotent of $\beta\ben$. Hence $\ben\setminus A$ is a member of all the minimal idempotents in $\beta\ben$. Therefore $\ben\setminus A$ is a central$^{\star}$-set but not a C$^{\star}$-set as $\ben\setminus A\not\in p$.
\end{remark}

\begin{definition}
Let $(S,+)$ be a discrete commutative semigroup. A sequence $\langle
x_{n}\rangle_{n=1}^{\infty}$ is said to be an \emph{almost minimal sequence} if
$$\bigcap_{m=1}^{\infty}\cl{FS(\langle x_n\rangle_{n=m}^\infty)}\cap J(S))\neq\emptyset.$$
\end{definition}

To provide an example of an \emph{almost minimal sequence} which is not minimal, let us recall the following notion of ``Banach density''. The author is grateful to Prof. Neil Hindman for his help in constructing this example.

\begin{definition}Let $A\subset\ben$. Then
\begin{enumerate}
\item $d^{*}(A)$ $=$ sup$\{\alpha\in\ber:(\forall\,k\in\ben)(\exists n>k)(\exists a\in\ben)(|A\cap\{a+1,a+2,\ldots,a+n\}|\geq \alpha\cdot n)\}$.
\item $\triangle^{*}$ $=$ $\{p\in\beta\ben : (\forall A\in p)(d^{*}(A)>0)\}$.

\end{enumerate}
$d^{*}(A)$ is said to be the upper Banach density of $A$.

\end{definition}

It follows from \cite[Theorem 20.5 and 20.6]{refHS} that $\triangle^{*}$ is a closed two-sided ideal of $(\beta\ben,+)$, so that $\cl K(\beta\ben)\subset\triangle^{*}$.\\

 Let us now recall the following Theorem from \cite{refA}, which we shall require to construct example of an almost minimal sequence which is not minimal. 

\begin{theorem}\label{Adams1}
Let $\langle
x_{n}\rangle_{n=1}^{\infty}$ be a sequence in $\ben$, such that for all $n\in\ben$ we have $x_{n+1}>\sum_{t=1}^{n}x_t$, and $T=\bigcap_{m=1}^{\infty} \cl FS(\langle x_{n}\rangle_{n=m}^{\infty})$. Then the following conditions are
equivalent:
\begin{enumerate}
\item $T\cap K(\beta\ben)\neq\emp$;
\item $T\cap \cl K(\beta\ben)\neq\emp$;
\item the set $\{x_{n+1}-\sum_{t=1}^{n}x_t:n\in\ben\}$ is bounded.
\end{enumerate}
\end{theorem}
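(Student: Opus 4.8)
The plan is to prove the cycle $(1)\Rightarrow(2)\Rightarrow(3)\Rightarrow(1)$. The implication $(1)\Rightarrow(2)$ is immediate from $K(\beta\ben)\subseteq\overline{K(\beta\ben)}$. The whole argument rests on translating the two topological hypotheses into density statements about the tails $FS(\langle x_n\rangle_{n=m}^\infty)$, using three standard facts from \cite{refHS}: a set $A$ is piecewise syndetic iff $\overline{A}\cap K(\beta\ben)\neq\emp$; a point $p$ lies in $\overline{K(\beta\ben)}$ iff every member of $p$ is piecewise syndetic; and $A$ is syndetic iff $\overline{A}$ meets every minimal left ideal of $\beta\ben$. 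The one structural remark I use repeatedly is that $FS(\langle x_n\rangle_{n=m+1}^\infty)\subseteq FS(\langle x_n\rangle_{n=m}^\infty)$, so the closures $\overline{FS(\langle x_n\rangle_{n=m}^\infty)}$ decrease with $m$ and $T$ is their nested intersection; since a nested intersection of nonempty compacta is nonempty, I can pass from ``each tail meets a fixed closed set'' to ``$T$ meets that set.''

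For $(3)\Rightarrow(1)$ I first record the gap computation for a super-increasing sequence. Writing $D_k=\{0\}\cup FS(\langle x_m,\ldots,x_{m+k}\rangle)$, the hypothesis $x_{n+1}>\sum_{t=1}^n x_t$ yields the disjoint decomposition $D_{k+1}=D_k\cup\big(x_{m+k+1}+D_k\big)$, in which the right block begins exactly a gap $x_{m+k+1}-\sum_{t=m}^{m+k}x_t=y_{m+k}+\sum_{t<m}x_t$ above $\max D_k$, where $y_n=x_{n+1}-\sum_{t=1}^n x_t$. Hence the largest gap occurring in $\{0\}\cup FS(\langle x_n\rangle_{n=m}^\infty)$ is $\max\big(x_m,\ \sup_{n\ge m}(y_n+\sum_{t<m}x_t)\big)$, which is finite once $\langle y_n\rangle$ is bounded; that is, every tail $FS(\langle x_n\rangle_{n=m}^\infty)$ is syndetic. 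Now fix any minimal left ideal $L$ of $\beta\ben$; it is closed and contained in $K(\beta\ben)$. Syndeticity gives $\overline{FS(\langle x_n\rangle_{n=m}^\infty)}\cap L\neq\emp$ for every $m$, and by the nested-compactness remark $T\cap L\neq\emp$, so $T\cap K(\beta\ben)\neq\emp$.

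For $(2)\Rightarrow(3)$ I argue the contrapositive: assuming $\langle y_n\rangle$ unbounded, I show $FS(\langle x_n\rangle_{n=1}^\infty)$ is not piecewise syndetic; since any $p\in T$ has this set as a member, the characterization of $\overline{K(\beta\ben)}$ then forces $T\cap\overline{K(\beta\ben)}=\emp$. The heart of the matter is a self-similar description of the gaps. From the decomposition above with $m=1$, the word $w_k$ listing, left to right, the successive gaps of $\{0\}\cup FS(\langle x_1,\ldots,x_k\rangle)$ satisfies $w_{k+1}=w_k\,(y_k)\,w_k$ with $w_1=(x_1)$, so the gap word of $FS$ is the infinite limit; read as copies of $w_r$ separated by single letters, the separators form the ruler sequence $s_i=y_{r+\nu_2(i)}$, with $\nu_2$ the $2$-adic valuation. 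It suffices to rule out, for every $b\ge x_1$, arbitrarily long intervals all of whose gaps are $\le b$ (the case $b<x_1$ then follows a fortiori). Given such a $b$, choose the least $r$ with $y_r>b$; then every letter of $w_r$ is among $x_1,y_1,\ldots,y_{r-1}$, hence $\le b$, while every odd-indexed separator equals $y_r>b$. Consequently any maximal run of gaps all $\le b$ spans at most two consecutive copies of $w_r$, so it contains at most $2^{r+1}-1$ gaps and has total length at most $(2^{r+1}-1)b$. Thus for no $b$ does $FS$ contain arbitrarily long intervals with gaps $\le b$, so it is not piecewise syndetic.

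The main obstacle is precisely this last combinatorial lemma: upgrading the individual large gaps produced by an unbounded $\langle y_n\rangle$ to the statement that large gaps recur with bounded spacing, so that no long block is ever dense. The self-similar recursion $w_{k+1}=w_k(y_k)w_k$ and the resulting ruler pattern of separators is what makes this quantitative; once it is in hand, the remaining steps are routine applications of the standard dictionary between $K(\beta\ben)$, $\overline{K(\beta\ben)}$, and (piecewise) syndeticity.
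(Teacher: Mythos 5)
Your proof is correct. Note first that the paper does not actually prove this statement: it simply cites Adams, \cite[Theorem 2.1]{refA}, so there is no in-paper argument to compare against line by line; what you have produced is a self-contained proof along the standard lines that Adams's result rests on. Your reduction of (1), (2) and (3) to the dictionary ``syndetic $\Leftrightarrow$ the closure meets every (minimal) left ideal'' and ``$p\in\cl K(\beta\ben)$ $\Leftrightarrow$ every member of $p$ is piecewise syndetic'' is exactly the right frame, and the nested-compacta remark correctly passes from the tails to $T$. The combinatorial core is also sound: for a super-increasing sequence the identity $D_{k+1}=D_k\cup(x_{m+k+1}+D_k)$ does give a disjoint, ordered decomposition, so the gap word genuinely satisfies $w_{k+1}=w_k\,(y_k)\,w_k$, the separators between consecutive copies of $w_r$ do follow the ruler pattern $y_{r+\nu_2(i)}$, and since every odd-indexed separator equals $y_r>b$ while all letters of $w_r$ are at most $b$ (by minimality of $r$), a run of gaps bounded by $b$ is trapped between two odd-indexed separators and hence has length at most $(2^{r+1}-1)b$; this is precisely the quantitative upgrade from ``some gaps are large'' to ``large gaps recur with bounded spacing'' that the non-piecewise-syndeticity requires. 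The one cosmetic caveat is that you should make explicit which results of \cite{refHS} you are invoking (Theorems 4.40 and 4.48 are the relevant ones), but nothing in the argument is missing.
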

\begin{proof}
See \cite[Theorem 2.1]{refA}.
\end{proof}

We already know that a subset $A$ of $\ben$ is central if it belongs to  some idempotent of $K(\beta\ben)$. Further, the members of the ultrafilters of $K(\beta\ben)$ are piecewise syndetic.
Replacing piecewise syndeticity with positive upper Banach density leads to
the class of \emph{essential idempotents}: viz. $q \in\beta\ben$ is an \emph{essential idempotent} if it is an idempotent ultrafilter, all of whose elements have positive upper Banach
density-that is, $q\in\triangle^{*}$. By \cite[Theorem 2.8]{refBD}, a set $S\subseteq\ben$ is a $D$-set if it is contained in some essential idempotent.
The authors proved in \cite[Theorem 11]{refBBDF} that $D$-sets also satisfy the conclusion of the original Central Sets Theorem and are in particular $J$-sets.

Now let $d\in\ben$, and let us define a sequence $\langle x_{n}\rangle_{n=1}^{\infty}$  in $\ben$ by the following formula: $x_{n+1}=\sum_{t=1}^{n}x_t+\lfloor\frac{n+(d-1)}{d}\rfloor$. Then the set $\{x_{n+1}-\sum_{t=1}^{n}x_t:n\in\ben\}$ being unbounded, we have $\bigcap_{m=1}^{\infty} \cl FS(\langle x_{n}\rangle_{n=m}^{\infty})\cap K(\beta\ben)=\emp$. Therefore, the sequence $\langle x_{n}\rangle_{n=1}^{\infty}$ is not minimal. But by \cite[Lemma 2.20]{refA}, we have $\bigcap_{m=1}^{\infty} \cl FS(\langle x_{n}\rangle_{n=m}^{\infty})\cap \triangle^{*}\neq\emp$. Since $\bigcap_{m=1}^{\infty} \cl FS(\langle x_{n}\rangle_{n=m}^{\infty})\cap \triangle^{*}$ is a compact subsemigroup of $\beta\ben$, we can choose an idempotent $p$ in $\bigcap_{m=1}^{\infty} \cl FS(\langle x_{n}\rangle_{n=m}^{\infty})\cap \triangle^{*}$. In particular, $FS(\langle x_{n}\rangle_{n=1}^{\infty})\in p$. Therefore, by the above discussion, $FS(\langle x_{n}\rangle_{n=1}^{\infty})$ satisfies the conclusion of the original Central Sets Theorem and is in particular a $J$-set.  Hence by the foregoing Theorem \ref{minimalch}, we have $\bigcap_{m=1}^{\infty}\cl{FS(\langle
x_n\rangle_{n=m}^\infty)}\cap J(\ben))\neq\emptyset$. This implies that this sequence $\langle x_{n}\rangle_{n=1}^{\infty}$ is \emph{almost minimal}.

\begin{qs}
Does there exist a sequence $\langle x_{n}\rangle_{n=1}^{\infty}$ in $\ben$ with the property that $FS(\langle x_{n}\rangle_{n=1}^{\infty})$ is a $C$-set but its Banach density is zero?
\end{qs}

The author is thankful to the anonymous referee for her/his help in simplifying the following proof.

\begin{theorem}\label{minimalch}
In the  semigroup $(\ben,+)$, the following conditions are
equivalent:
\begin{enumerate}
\item[(a)]$\langle
x_{n}\rangle_{n=1}^{\infty}$ is an almost minimal sequence;
\item[(b)]$FS(\langle x_{n}\rangle_{n=1}^{\infty})$ is a $J$-set;
\item[(c)] there is an idempotent in  $\bigcap_{m=1}^{\infty}\cl{FS(\langle
x_n\rangle_{n=m}^\infty)}\cap J(\ben))\neq\emptyset$.
\end{enumerate}
\end{theorem}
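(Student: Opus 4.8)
The plan is to establish the cycle of implications (a) $\Rightarrow$ (b) $\Rightarrow$ (c) $\Rightarrow$ (a), exploiting that $J(\ben)$ is a closed two-sided ideal of $\beta\ben$ together with Theorem \ref{J} characterizing $J$-sets via $J(S)\cap\cl A\neq\emp$.

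First I would prove (c) $\Rightarrow$ (a), which is essentially immediate: if there is an idempotent $p$ in $\bigcap_{m=1}^{\infty}\cl{FS(\langle x_n\rangle_{n=m}^\infty)}\cap J(\ben)$, then in particular this intersection is nonempty, which is exactly the definition of $\langle x_n\rangle_{n=1}^{\infty}$ being almost minimal. Next, for (a) $\Rightarrow$ (b), I would use that almost minimality gives a point $q\in\bigcap_{m=1}^{\infty}\cl{FS(\langle x_n\rangle_{n=m}^\infty)}\cap J(\ben)$; since $q\in J(\ben)$, every member of $q$ is a $J$-set, and since $q\in\cl{FS(\langle x_n\rangle_{n=1}^{\infty})}$ we have $FS(\langle x_n\rangle_{n=1}^{\infty})\in q$, so $FS(\langle x_n\rangle_{n=1}^{\infty})$ is a $J$-set.

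The substantive step is (b) $\Rightarrow$ (c), and this is where I expect the main obstacle. Assuming $FS(\langle x_n\rangle_{n=1}^{\infty})$ is a $J$-set, I want to produce an idempotent sitting inside $T\cap J(\ben)$, where $T=\bigcap_{m=1}^{\infty}\cl{FS(\langle x_n\rangle_{n=m}^\infty)}$. The natural strategy is to show that $T\cap J(\ben)$ is a nonempty compact right topological subsemigroup, and then invoke the Ellis theorem to extract an idempotent. Compactness is clear since both $T$ and $J(\ben)$ are closed, and $J(\ben)$ being a two-sided ideal makes $T\cap J(\ben)$ closed under the multiplication whenever $T$ is a subsemigroup; the standard fact that $T=\bigcap_{m=1}^{\infty}\cl{FS(\langle x_n\rangle_{n=m}^\infty)}$ is a subsemigroup of $\beta\ben$ handles the latter. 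The delicate part is nonemptiness of $T\cap J(\ben)$: I would use Theorem \ref{J} to pass from "$FS(\langle x_n\rangle_{n=1}^{\infty})$ is a $J$-set" to a point of $J(\ben)\cap\cl{FS(\langle x_n\rangle_{n=1}^{\infty})}$, but to land inside the full intersection $T$ I must secure a point of $J(\ben)$ lying in $\cl{FS(\langle x_n\rangle_{n=m}^\infty)}$ for every $m$ simultaneously.

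To overcome this, I would argue that each tail $FS(\langle x_n\rangle_{n=m}^\infty)$ is itself a $J$-set: any finite product of the form $a\cdot\prod_{t\in H}f(t)$ realized for the whole sequence can be realized using only terms with index $\ge m$ by shifting the witnessing finite set, so the $J$-set property is inherited by tails. Then by Theorem \ref{J} each $\cl{FS(\langle x_n\rangle_{n=m}^\infty)}\cap J(\ben)$ is nonempty, and these sets form a nested (decreasing in $m$) family of closed subsets of the compact space $\beta\ben$; by the finite intersection property their total intersection $T\cap J(\ben)$ is nonempty. With nonemptiness and the subsemigroup structure in hand, the Ellis--Numakura lemma yields the required idempotent in $T\cap J(\ben)$, completing (b) $\Rightarrow$ (c) and closing the cycle.
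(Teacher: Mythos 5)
Your overall architecture is the same as the paper's: reduce (b) $\Rightarrow$ (c) to showing that $\cl{FS(\langle x_n\rangle_{n=m}^\infty)}\cap J(\ben)\neq\emp$ for each $m$, use that these form a nested family of closed sets to get nonemptiness of $T\cap J(\ben)$, note that $T\cap J(\ben)$ is a compact subsemigroup (by \cite[Lemma 5.11]{refHS} and the ideal property of $J(\ben)$), and extract an idempotent. The implications (a) $\Rightarrow$ (b) and (c) $\Rightarrow$ (a) are handled the same way in both.

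However, your justification of the one substantive claim --- that each tail $FS(\langle x_n\rangle_{n=m}^\infty)$ is a $J$-set --- does not work as stated. You say a product $a\cdot\prod_{t\in H}f(t)$ landing in $FS(\langle x_n\rangle_{n=1}^{\infty})$ ``can be realized using only terms with index $\ge m$ by shifting the witnessing finite set.'' This conflates two different index sets: the set $H$ in the $J$-set definition indexes the test sequences $f\in F$, while membership in the tail is about which $x_t$'s sum to the \emph{value} $a+\sum_{t\in H}f(t)$. That value is a fixed integer; if its representation as a finite sum of the $x_n$'s uses indices below $m$, no re-indexing of $H$ (or of anything else) moves it into $FS(\langle x_n\rangle_{n=m}^\infty)$. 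The correct route, which is what the paper does (at the level of the ultrafilter $p\in J(\ben)\cap\cl{FS(\langle x_n\rangle_{n=1}^{\infty})}$ supplied by Theorem \ref{J}), is to write $FS(\langle x_n\rangle_{n=1}^{\infty})$ as the union of the finite set $FS(\langle x_n\rangle_{n=1}^{m-1})$, the tail $FS(\langle x_n\rangle_{n=m}^{\infty})$, and the finitely many translates $t+FS(\langle x_n\rangle_{n=m}^{\infty})$ for $t\in FS(\langle x_n\rangle_{n=1}^{m-1})$. Since $J$-sets form a partition regular family and a finite set is not a $J$-set, one of the last two types of pieces must be a $J$-set (equivalently, must belong to $p$), and since $J$-sets in $(\ben,+)$ are translation invariant, in either case the tail itself is a $J$-set. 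With that repair your argument closes; without it, the nonemptiness of $\cl{FS(\langle x_n\rangle_{n=m}^\infty)}\cap J(\ben)$ for $m>1$ is unproved.
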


\begin{proof}
$(a)\Rightarrow (b)$: The proof  follows from the definition of an almost minimal sequences.

$(b)\Rightarrow (c)$:  Let $FS(\langle x_{n}\rangle_{n=1}^{\infty})$ be a $J$-set. Then  by Theorem \ref{J},
we have $J(\ben)\cap \cl{FS(\langle x_{n}\rangle_{n=1}^{\infty})}\neq
\emptyset$. We choose $p\in J(\ben)\cap \cl{FS(\langle x_{n}\rangle_{n=1}^{\infty})}$. By \cite[Lemma 5.11]{refHS},
$\bigcap_{m=1}^{\infty}\cl{FS(\langle x_{n}\rangle_{n=m}^{\infty})}$ is a subsemigroup of
$\beta \ben$. As a consequence of \cite[Theorem 3.5]{refDHS}, it follows that $J(\ben)$ is a subsemigroup of $\beta\ben$.  Also, $J(\ben)$ being closed, it is a compact subsemigroup of $\beta\ben$. Therefore, it suffices to show that for each $m$, $\bigcap_{m=1}^{\infty}\cl{FS(\langle
x_{n}\rangle_{n=m}^{\infty})}\cap J(\ben) \neq \emptyset$, because then it must contain an idempotent.
For this, it in turn suffices to show  that, for any $m\in\ben$ we have  $\cl{FS(\langle
x_{n}\rangle_{n=m}^{\infty})}\cap J(\ben) \neq \emptyset$. So  let $m\in\mathbb{N}$ be given. Then \\
$FS(\langle
x_{n}\rangle_{n=1}^{\infty})$\\
$=FS(\langle
x_{n}\rangle_{n=m}^{\infty})\cup FS(\langle
x_{n}\rangle_{n=1}^{m-1})\cup\bigcup\{t+FS(\langle
x_{n}\rangle_{n=m}^{\infty}):t\in FS(\langle
x_{n}\rangle_{n=1}^{m-1})\}$.\\
 Hence we must have one of the followings:
\begin{enumerate}

\item $FS(\langle x_{n}\rangle_{n=1}^{m-1})\in p$;

\item $FS(\langle x_{n}\rangle_{n=m}^{\infty})\in p$;

\item $t+FS(\langle x_{n}\rangle_{n=m}^{\infty})\in p$ for some
$t\in FS(\langle x_{n}\rangle_{n=1}^{m-1})$.
\end{enumerate}

Clearly (1) does not hold, because in that case $p$ becomes a member of  ${\mathbb N}$, that is a principal ultrafilter, while  $p\in\beta{\mathbb N}\setminus{\mathbb N}$.  If (2) holds, then we are done.  So assume that (3) holds. Then for some $t\in
FS(\langle x_{n}\rangle_{n=1}^{m-1})$, we have that  $t+FS(\langle x_{n}\rangle_{n=m}^{\infty})\in p$. We choose some $q\in\cl{FS(\langle x_{n}\rangle_{n=m}^{\infty})}$ so that $t+q=p$. For every $F\in q$, we have $t\in$ $\{n\in\ben$ $:\,-n+(t+F)\in q\}$ so that $t+F\in p$. Since $J$-sets in $(\ben\,,+)$ are translation invariant, $F$ becomes a $J$-set. Thus
$q\in J(\ben)\cap \cl{FS(\langle x_{n}\rangle_{n=m}^{\infty})}$.\\

$(c)\Rightarrow (a)$: This case is obvious.
\end{proof}

\begin{lemma}
If $A$ is a $C$-set in $(\ben, +)$ then for any $n\in\ben$, $nA$ and $n^{-1}A$ are also C-sets,  where $n^{-1}A=\{m\in\ben:n\cdot m\in A\}$.

\end{lemma}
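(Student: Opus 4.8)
The plan is to lean on the characterisation, recalled just above, that $A$ is a C-set in $(\ben,+)$ precisely when there is an idempotent $p\in J(\ben)\cap\cl A$, and to transport such an idempotent along the endomorphism $M_n\colon x\mapsto nx$ of $(\ben,+)$. Since $M_n$ is an injective homomorphism, its continuous extension $\beta M_n\colon\beta\ben\to\beta\ben$ is an injective continuous homomorphism of $(\beta\ben,+)$; it sends idempotents to idempotents and is a homeomorphism of $\beta\ben$ onto its range $\cl{n\ben}$. For any $p$ and any $B$ one has $B\in\beta M_n(p)$ iff $n^{-1}B\in p$, so in particular $nA\in\beta M_n(p)$ iff $A\in p$. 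I would also record that every additive idempotent $p$ satisfies $n\ben\in p$: pushing $p$ through the homomorphism $\ben\to\ben/n\ben$ gives an idempotent of the finite group $\ben/n\ben$, which can only be $0$. Hence every idempotent lies in $\cl{n\ben}$, i.e.\ in the range of $\beta M_n$.

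The combinatorial core is the claim that $nE$ is a $J$-set whenever $E$ is a $J$-set. To prove it, given $F=\{f_1,\dots,f_k\}\in\pf({}^{\ben}\ben)$ I would first pass, by the pigeonhole principle, to an infinite index set $T_0$ on which the residue vector $(f_1(t)\bmod n,\dots,f_k(t)\bmod n)$ is constant, equal to $(\rho_1,\dots,\rho_k)$, writing $f_i(t)=\rho_i+n\tilde f_i(t)$ for $t\in T_0$. Then I would cut $T_0$ into consecutive blocks $B_1,B_2,\dots$ each of size $n$ and set $\hat g_i(j)=\rho_i+\sum_{t\in B_j}\tilde f_i(t)$, so that $\sum_{t\in B_j}f_i(t)=n\,\hat g_i(j)$ and each $\hat g_i$ is a genuine sequence in $\ben$. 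Applying the $J$-set property of $E$ to $\{\hat g_1,\dots,\hat g_k\}$ yields $b\in\ben$ and $K\in\pf(\ben)$ with $b+\sum_{j\in K}\hat g_i(j)\in E$; with $a=nb$ and $H=\bigcup_{j\in K}B_j$ one gets $a+\sum_{t\in H}f_i(t)=n\bigl(b+\sum_{j\in K}\hat g_i(j)\bigr)\in nE$ for every $i$, as required. I would pair this with the easier dual fact that if $X\subseteq n\ben$ is a $J$-set then so is $n^{-1}X$: applying the $J$-set property of $X$ to the scaled sequences $j\mapsto ng_i(j)$ gives a witness $a'+\sum_{j\in H'}ng_i(j)\in X\subseteq n\ben$, and since the summation already lies in $n\ben$ the constant $a'$ is automatically a multiple of $n$, so dividing by $n$ produces the desired witness for $n^{-1}X$.

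With these two transfer facts I would assemble both halves. For $nA$: take an idempotent $p\in J(\ben)\cap\cl A$ and set $q=\beta M_n(p)$; then $q$ is idempotent with $nA\in q$, and $q\in J(\ben)$ because any $D\in q$ has $n^{-1}D\in p$ a $J$-set, whence $D\supseteq n\,(n^{-1}D)$ is a $J$-set by the first claim and upward closure. So $q$ witnesses that $nA$ is a C-set. For $n^{-1}A$: since $p\in\cl{n\ben}$ lies in the range of $\beta M_n$, injectivity gives a unique $r$ with $\beta M_n(r)=p$, and $\beta M_n(r+r)=p+p=p=\beta M_n(r)$ forces $r$ to be idempotent. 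Here $n^{-1}A\in r$ because $A\in p=\beta M_n(r)$, and $r\in J(\ben)$ because any $B\in r$ gives $nB\in p$, so $nB\subseteq n\ben$ is a $J$-set and the dual fact yields that $B=n^{-1}(nB)$ is a $J$-set. Thus $r$ witnesses that $n^{-1}A$ is a C-set.

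I expect the only genuine obstacle to be the first transfer claim, that $E$ a $J$-set forces $nE$ a $J$-set. A naive proof that merely scales the sequences founders because the $J$-set definition for $E$ returns a shift $a$ with no control over its residue modulo $n$, while the differing residues $\rho_i$ of the $f_i$ cannot be reconciled by a single shift; the size-$n$ block construction above is exactly what resolves this, since each block contributes the residue $\rho_i$ once and the cross term $|K|\rho_i$ is absorbed into $\hat g_i$ rather than competing with $a$. Everything else is soft: the homomorphism and injectivity of $\beta M_n$, the fact that every idempotent meets $n\ben$, and the membership characterisations of $J(\ben)$ and of C-sets.
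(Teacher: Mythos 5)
Your proof is correct, but note that the paper does not actually prove this lemma: it only cites Lemma 8.1 of \cite{refLi}, so your self-contained argument is necessarily a different route from what the paper offers. The soft half of your argument (transporting idempotents of $J(\ben)$ along the extension $\beta M_n$ of $x\mapsto nx$, using its injectivity, the identification of its range with $\cl{\,n\ben}$, and the standard fact that every additive idempotent contains $n\ben$) is sound, and the combinatorial core --- that $E\mapsto nE$ preserves $J$-sets, and that $X\mapsto n^{-1}X$ preserves them for $X\subseteq n\ben$ --- is handled correctly: the pigeonhole reduction to constant residue vectors followed by the size-$n$ block construction is exactly what is needed, since a single additive witness $a$ cannot absorb the $k$ different residues $\rho_i$, whereas each block of $n$ indices contributes $n\rho_i+n(\cdots)$ and so sums to a multiple of $n$ by design. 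Two small points you should make explicit but which cause no trouble: the auxiliary values $\tilde f_i(t)$ may be $0$, but only $\hat g_i(j)=\rho_i+\sum_{t\in B_j}\tilde f_i(t)\geq 1$ enters the argument; and the assembly step silently uses $n^{-1}(nA)=A$, $n(n^{-1}D)\subseteq D$, and the upward closure of the family of $J$-sets, all immediate from injectivity of $x\mapsto nx$ and the definition of $J$-set. As for what each approach buys: the paper's one-line citation buys brevity at the cost of importing the machinery of \cite{refLi}, while your argument makes the statement self-contained and shows that the only genuinely combinatorial input is the elementary block argument for $J$-sets.
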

\begin{proof}
\cite[Lemma 8.1]{refLi}.
\end{proof}

\begin{lemma}\label{C*} If $A$ is a $C^{\star}$-set in $(\ben,\, +)$ then $n^{-1}A$ is also a $C^{\star}$-set for any $n\in\ben$.

\end{lemma}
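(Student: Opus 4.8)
The plan is to exploit the duality between $C$-sets and $C^\star$-sets and then piggyback on the preceding lemma, which already handles the case of $C$-sets. The first thing I would record is a dual characterization: a set $A\subseteq\ben$ is a $C^\star$-set if and only if it meets every $C$-set. One direction is immediate from the definitions. If $A$ is a $C^\star$-set and $B$ is a $C$-set, then by the idempotent characterization of $C$-sets there is an idempotent $q\in\cl B\cap J(\ben)$; since $A$ belongs to every idempotent of $J(\ben)$ we have $A,B\in q$, hence $A\cap B\in q$ and in particular $A\cap B\neq\emp$. For the converse I would argue by contraposition: if $A$ is not a $C^\star$-set then $A\notin p$ for some idempotent $p\in J(\ben)$, so $\ben\setminus A\in p$, and the characterization theorem then exhibits $\ben\setminus A$ as a $C$-set that $A$ fails to meet.

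With this equivalence available, I would prove the lemma by checking that $n^{-1}A$ meets every $C$-set. So let $B$ be an arbitrary $C$-set. By the preceding lemma the image $nB=\{nm:m\in B\}$ is again a $C$-set, and since $A$ is a $C^\star$-set the forward direction of the duality gives $A\cap nB\neq\emp$. Picking $x\in A\cap nB$ and writing $x=nm$ with $m\in B$, we get $nm=x\in A$, that is $m\in n^{-1}A$; thus $m\in n^{-1}A\cap B$ and so $n^{-1}A\cap B\neq\emp$. As $B$ was an arbitrary $C$-set, $n^{-1}A$ meets every $C$-set, and the converse direction of the duality then yields that $n^{-1}A$ is a $C^\star$-set, completing the argument.

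The reason this is short is that the substantive content has already been isolated: the preceding lemma supplies the fact that $nB$ is a $C$-set whenever $B$ is, and the idempotent characterization of $C$-sets supplies the nontrivial converse direction of the duality. The only point that genuinely requires care—and the one place I expect a careless proof to go wrong—is the direction of the multiplication-by-$n$ operation. One must pass from $B$ to its \emph{image} $nB$ (not its preimage $n^{-1}B$), precisely so that a witness $x=nm\in A$ can be read back as the membership $m\in n^{-1}A$; matching this up correctly with the definition $n^{-1}A=\{m\in\ben:n\cdot m\in A\}$ is the whole of the bookkeeping, and beyond it no real obstacle arises.
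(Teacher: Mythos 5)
Your proposal is correct and follows essentially the same route as the paper: reduce to showing $n^{-1}A$ meets every $C$-set, apply the preceding lemma to get that $nB$ is a $C$-set, and pull a witness $x=nm\in A\cap nB$ back to $m\in n^{-1}A\cap B$. The only difference is that you explicitly justify the duality ``$C^\star$-set $\Leftrightarrow$ meets every $C$-set,'' which the paper uses without comment; your justification via the idempotent characterization is correct.
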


\begin{proof}
Let $A$ be a $C^{\star}$-set and $t\in\ben$. To prove that $t^{-1}A$ is a $C^{\star}$-set, it is sufficient to show that for any C-set $C$, we have $C\cap t^{-1}A\neq \emp$. Since $C$ is a C-set, $tC$ is also a C-set, so that $A\cap tC\neq \emp$. Choose $n\in tC\cap A$ and $k\in C$ such that $n = tk$. Therefore $k=n/t\in t^{-1}A$. Hence $C\cap t^{-1}A\neq \emp$.
\end{proof}

\begin{theorem}\label{C*comb}
 Let $\langle x_{n}\rangle_{n=1}^{\infty}$ be an almost minimal
sequence and $A$ be a $C^{\star}$-set in $(\ben,\, +)$. Then
there exists a sum subsystem $\langle y_{n}\rangle_{n=1}^{\infty}$
of $\langle x_{n}\rangle_{n=1}^{\infty}$ such that
$$FS(\langle y_{n}\rangle_{n=1}^{\infty})\cup
FP(\langle y_{n}\rangle_{n=1}^{\infty})\subseteq A.$$
\end{theorem}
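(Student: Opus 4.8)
I would model the argument on the inductive constructions behind the IP$^{\star}$ and central$^{\star}$ analogues (Theorems \ref{ip*} and \ref{central*}), the only structural change being that the governing idempotent now sits in $J(\ben)$ and that multiplicative largeness is delivered by Lemma \ref{C*}. First I would exploit almost minimality. Writing $T=\bigcap_{m=1}^{\infty}\cl{FS(\langle x_n\rangle_{n=m}^{\infty})}$, the hypothesis together with the equivalence $(a)\Rightarrow(c)$ of Theorem \ref{minimalch} produces an idempotent $p\in T\cap J(\ben)$. Since $A$ is a $C^{\star}$-set it belongs to every idempotent of $J(\ben)$, so $A\in p$; and because $p=p+p$, the set $A^{\star}=\{x\in A:-x+A\in p\}$ satisfies $A^{\star}\in p$, together with $-x+A^{\star}\in p$ for every $x\in A^{\star}$.

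Next I would build $\langle y_n\rangle_{n=1}^{\infty}$ and the witnessing blocks $\langle H_n\rangle_{n=1}^{\infty}$ recursively, maintaining the twin invariants that every partial sum lies in $A^{\star}$ and every partial product lies in $A$. Having selected $y_1,\dots,y_n$ with $\max H_n=M$ (and $y_i=\sum_{t\in H_i}x_t$), I would pick $y_{n+1}$ from
$$B=A^{\star}\cap\bigcap\{-s+A^{\star}:s\in FS(\langle y_i\rangle_{i=1}^{n})\}\cap\bigcap\{c^{-1}A:c\in FP(\langle y_i\rangle_{i=1}^{n})\}\cap FS(\langle x_t\rangle_{t=M+1}^{\infty}).$$
Every displayed set lies in $p$: one has $A^{\star}\in p$; each $-s+A^{\star}\in p$ by the idempotent property, the invariant guaranteeing $s\in A^{\star}$; each $c^{-1}A\in p$ because $c^{-1}A$ is again a $C^{\star}$-set by Lemma \ref{C*} and $p$ is an idempotent of $J(\ben)$; and $FS(\langle x_t\rangle_{t=M+1}^{\infty})\in p$ since $p\in T$. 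As $B$ is a finite intersection of members of the ultrafilter $p$, we get $B\in p$, hence $B\neq\emp$. Any $y_{n+1}\in B$ has the form $\sum_{t\in H_{n+1}}x_t$ with $\min H_{n+1}>M=\max H_n$, so $\langle y_n\rangle$ is a genuine sum subsystem.

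It then remains to check that choosing $y_{n+1}\in B$ preserves both invariants: membership $y_{n+1}\in A^{\star}$ handles the singleton sum and the singleton product; for a nonempty $F\subseteq\nhat{n}$, the membership $y_{n+1}\in -s+A^{\star}$ with $s=\sum_{i\in F}y_i$ forces $y_{n+1}+s\in A^{\star}$, while $y_{n+1}\in c^{-1}A$ with $c=\prod_{i\in F}y_i$ forces $y_{n+1}\cdot c\in A$. Running the recursion yields $FS(\langle y_n\rangle_{n=1}^{\infty})\subseteq A^{\star}\subseteq A$ and $FP(\langle y_n\rangle_{n=1}^{\infty})\subseteq A$, which is the desired conclusion.

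The one genuinely delicate point I expect is the \emph{simultaneous} control of additive and multiplicative structure inside a single ultrafilter. The additive bookkeeping is routine once a single idempotent is fixed, via the standard $A^{\star}$ machinery; the multiplicative bookkeeping, however, requires $c^{-1}A\in p$ for the ever-growing family of partial products $c$, and this is exactly where the $C^{\star}$ hypothesis enters through Lemma \ref{C*}. It is crucial here that $p$ is an idempotent of $J(\ben)$ rather than merely of $\beta\ben$, so that every $C^{\star}$-set — in particular each translate $c^{-1}A$ — is forced into $p$. Everything else (the block condition on the $H_n$ and the propagation of the two invariants) is straightforward.
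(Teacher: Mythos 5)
Your proof is correct and follows essentially the same route as the paper: the idempotent $p\in J(\ben)$ supplied by Theorem \ref{minimalch}, the $A^{\star}$ bookkeeping for the finite sums, and Lemma \ref{C*} to force the multiplicative translates into $p$. The only (harmless) difference is that you keep the partial products in $A$ by intersecting with $c^{-1}A$, whereas the paper keeps them in $A^{\star}$ by intersecting with $s^{-1}A^{\star}$; both versions close the induction.
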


\begin{proof}
Since $\langle x_{n}\rangle_{n=1}^{\infty}$ is an almost minimal
sequence, by Theorem \ref{minimalch}, we can find some idempotent $p\in J(\ben)$ such that $FS(\langle x_{n}\rangle_{n=m}^{\infty})\in p$, for each $m\in\ben$.  Again, since $A$ is a $C^{\star}$-set in $(\ben,\, +)$, from Lemma \ref{C*}, it follows that, $s^{-1}A\in p$, for every $s\in \ben$. Let $A^{\star}=\{s\in A$ : $-s+A\in p\}$. Then by ~\cite[Lemma 4.14]{refHS}, we have $A^{\star}\in p$. Then we can choose $y_{1}\in A^{\star}\cap FS(\langle x_{n}\rangle_{n=1}^{\infty})$. Inductively, let $m\in \mathbb{N}$ and $\langle y_{i}\rangle_{i=1}^{m}$, $\langle H_{i}\rangle_{i=1}^{m}$ in $\mathcal{P}_{f}(\mathbb{N})$ be chosen with the following properties:

\begin{enumerate}
\item for all $i\in \{1,2,\ldots,m-1\}$, $\max H_{i}< \min H_{i+1}$;
\item if $y_{i}=\sum_{t\in H_{i}}x_{t}$ then $\sum_{t\in H_{m}}x_{t}\in A^{\star}$ and $FP(\langle y_{i} \rangle_{i=1}^{m})\subseteq A^{\star}$.
\end{enumerate}

We observe that $\{\sum_{t\in H}x_{t}$ : $H\in \mathcal{P}_{f}(\mathbb{N}), \min H> \max H_{m}\}\in p$.
Let us set $B=\{\sum_{t\in H}x_{t}$ : $H\in \mathcal{P}_{f}(\mathbb{N}),\,\, \min H> \max H_{m}\}$,  $E_{1}=FS(\langle y_{i}\rangle_{i=1}^{m})$ and $E_{2}=FP(\langle y_{i}\rangle_{i=1}^{m})$. Now consider
$$D=B\cap A^{\star}\cap \bigcap _{s\in E_{1}}(-s+A^{\star})\cap \bigcap _{s\in E_{2}}(s^{-1}A^{\star}).$$
Then $D\in p$. Choose $y_{m+1}\in D$ and $H_{m+1}\in \mathcal{P}_{f}(\mathbb{N})$ such that $\min H_{m+1}> \max H_{m}$. Putting  $y_{m+1}=\sum_{t\in H_{m+1}}x_{t}$, it shows that the induction can be continued and this eventually proves the theorem.
\end{proof}

\noindent\textbf{Acknowledgements.}
The author is thankful to Prof. Neil Hindman for his helpful suggestions and comments. The author would also like to thank the anonymous referee for many helpful suggestions and corrections, and in particular for the assistance she/he provided towards simplifying the proof of Theorem \ref{minimalch}.

\bibliographystyle{amsplain}

\end{document}